\documentclass[11pt]{article}
\usepackage[margin=2.5cm]{geometry}
\usepackage{amsmath, amsthm, amsfonts, amssymb}
\usepackage[mathscr]{euscript}
\usepackage{amsmath}
\usepackage{array, float}

\usepackage{graphicx, latexsym}
\usepackage{lscape}
\usepackage{multicol}
\usepackage{multirow}
\usepackage{graphicx}
\usepackage{subfig}
\usepackage{tikz}
\usepackage{cite}
\newtheorem{thm}{Theorem}

\newtheorem{cor}{Corollary}
\newtheorem{obs}{Observation}
\newtheorem{lem}[thm]{Lemma}

\newtheorem{discu}{Discussion:}

\newtheorem{conje}{Conjecture:}

\usepackage{fancyhdr}
\begin{document}
	\begin{center}
		\textbf{\huge The Edge Geodesic Transversal Problem}\\
	\end{center}
	\begin{center}
		\textbf{R. Prabha$^{\text a}$, B. Sandhiya$^{\text a,*}$, Paul Manuel$^{\text b}$}
	\end{center}
	\begin{center}
		\small $^{\text a}$Department of Mathematics, Ethiraj College for Women, Chennai 600 008, India\\
		\small $^{\text b}$Department of Information Science, College of Computing Science and Engineering, Kuwait University, Kuwait\\
	\end{center}
	\begin{center}
		*Corresponding author: sandhiyabasskar14@gmail.com
	\end{center}
	\begin{abstract}
For a given connected graph $G(V,E)$, the geodesic transversal problem finds a set $S$ of vertices of $G$ with minimum cardinality such that every maximal geodesic of $G$ contains at least one vertex of $S$. The cardinality of such a smallest set $S$ is called the gt-number of $G$ and is denoted as $gt(G)$. In this paper, we introduce the \textit{Edge geodesic transversal problem} as the edge-based extension of the geodesic transversal problem. The problem of determining a set $S\subseteq E(G)$ with minimum number of elements such that every maximal geodesic $P$ in $G$ traverses at least one edge of $S$ is called the edge geodesic transversal problem, and the cardinality of such a set is called the edge geodesic transversal number or the $gt_e$-number of $G$, denoted as $gt_e(G)$. Further, we investigate the effect of an edge cut of a graph on the edge geodesic transversal problem and prove a novel result called the edge cut transversal theorem which is used in the computation of edge geodesic transversal number of hypercube, stacked prism and ladder graph. 

\end{abstract}
\hspace*{0.5cm}\textbf
{Keywords:} {maximal geodesic, geodesic transversal, cartesian product of graphs, hypercube.}\\
 {\bf AMS Subject Classification}:  05C12
 
\section{Introduction}

Graphs are widely used to represent real-world systems in which objects(nodes or vertices) interact through connections(edges). Examples include communication networks, transportation systems, social networks, chemical structures, and biological structures. A fundamental objective in the study of graphs is to analyze how information, resources, or entities move through such networks. For this reason, shortest paths play a crucial role in graph theory.

Given a connected undirected graph $G(V,E)$, the shortest path between two vertices $u,v\in V(G)$ is called a geodesic. Geodesics provide a natural measure of efficiency in a network because they represent routes or paths with minimum distance. As a result, geodesic-based investigations have become an important area of research in both theoretical and applied graph theory. Geodesic-based concepts play a significant role in optimization problems that aim to identify a vertex or a set of vertices with a distinguished role in relation to shortest paths. Such investigations help identify critical locations within a network and have applications in routing, network monitoring, facility location, communication networks, and network control.

The geodesic transversal problem is one such optimization problem which was introduced by Bre\v{s}ar, Klav\v{z}ar and Manuel \cite{1st paper} in 2022. A set $S\subseteq V(G)$ is a geodesic transversal of $G$, if the vertices of $S$ collectively hits all maximal geodesics in $G$. A geodesic transversal with minimum elements is called a geodesic transversal set or $gt$-set and its cardinality is the geodesic transversal number or the $gt$-number of $G$.

While the geodesic transversal problem focuses on selecting vertices that intersect every maximal geodesic, a natural extension of this concept considers the role of edges. In many network applications, communication and transportation occur through links rather than through individual nodes$\backslash$vertices. This observation motivates the introduction of the edge geodesic-transversal problem. 

An edge geodesic-transversal is a set $S\subseteq E(G)$ such that every maximal geodesic of $G$ passes through at least one edge of $S$. An edge geodesic-transversal with minimum cardinality is called the edge geodesic transversal set or the $gt_e$-set and the number of edges in $S$ is called the edge geodesic transversal number or the $gt_e$-number of $G$, denoted as $gt_e(G)$. The problem of finding the $gt_e$-number of any given connected graph is known as the edge geodesic transversal problem. 

In this paper, we have solved the edge geodesic transversal problem for $r$-dimensional hypercube. The $r$-dimensional hypercube is a fundamental interconnection network that has been widely investigated in graph theory and parallel computing. Its regular structure, high degree of symmetry, strong connectivity, and logarithmic diameter enable efficient communication and routing among networks. For basic properties of the hypercube, the reader is referred to  \cite{hyper1}, \cite{hyper2} and \cite{hyper3}.

\section{Overview of the Paper}

We begin the study of edge geodesic transversal problem in section $3$, by establishing two fundamental bounds. First, we show that $gt_e(G)$ is bounded below by the $gt(G)$. Then we prove that $gt_e(G)$ is bounded above by the sum of the degrees of the vertices belonging to a $gt$-set of $G$ and further we compute the $gt_e$-number of some basic graphs. In section 4, we prove a theorem namely, the \textit{edge cut transversal theorem}, which is the most significant result of this paper. Finally, using the edge cut transversal theorem we determine the $gt_e$-number of hypercube, stacked prism and ladder graph.

\section{Preliminaries}

Consider a path $P_n$ on $n$ vertices. Clearly, the path is the only maximal geodesic and every single edge is an edge geodesic transversal set and hence $gt_e(P_n)=1$. In case of a complete graph $K_n, n\geq1$, every edge is a maximal geodesic and hence $gt_e (K_n)=n(n-1)/2$. We begin with the following lemma which establishes a relationship between $gt_e(G)$ and $gt(G)$. 
\begin{lem}
   If $G$ is a connected graph, then $gt_e(G)\geq gt(G)$. 
\end{lem}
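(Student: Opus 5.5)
The plan is to convert an edge geodesic transversal into a vertex geodesic transversal of no larger cardinality. Let $S\subseteq E(G)$ be a $gt_e$-set of $G$, so $|S|=gt_e(G)$ and every maximal geodesic of $G$ traverses some edge of $S$. For each edge $e=uv\in S$ we pick one of its endpoints, say $u_e$, and set $T=\{u_e : e\in S\}\subseteq V(G)$. Since distinct edges may give the same chosen vertex, we get $|T|\le |S|$. It then suffices to show that $T$ is a geodesic transversal, because this yields $gt(G)\le |T|\le |S| = gt_e(G)$.

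To check that $T$ is a geodesic transversal, take any maximal geodesic $P$ of $G$. By the choice of $S$, $P$ traverses some edge $e=uv\in S$. Traversing $e$ means both $u$ and $v$ lie on $P$, and in particular $u_e$ lies on $P$. Hence $P$ contains a vertex of $T$, so $T$ hits every maximal geodesic.

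No real obstacle is expected. The one point to state carefully is that "traversing an edge" includes its endpoints, so an arbitrary choice of endpoint works and no clever selection is required. The degenerate case where $G$ is a single vertex ($K_1$) has no edges, so there the inequality should be read via the convention that both numbers are $0$, or the case can be excluded. For every connected graph with at least one edge, the argument above goes through unchanged.
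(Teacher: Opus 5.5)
Your proof is correct and is essentially the paper's argument: choose one endpoint of each edge of a $gt_e$-set and note that these vertices form a geodesic transversal of size at most $gt_e(G)$. Your explicit remarks that both endpoints of a traversed edge lie on the geodesic, and that $|T|\le |S|$ because chosen endpoints may coincide, simply spell out what the paper calls ``clear.''
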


\begin{proof}
    Let $T$ be a $gt_e$-set of $G$ with cardinality $k$. Suppose that $T=\{e_i:u_iv_i|i=1,2,...,k\}$. Then, the collection of one of the terminal vertices of these $k$ edges, say $S_1 =\{u_1, u_2, \ldots, u_k\}$ clearly forms a geodesic transversal of $G$. Thus, $gt(G)\leq k=gt_e(G)$. 
\end{proof}

\begin{lem}
    Let $G(V,E)$ be a connected graph and $S=\{u_1, u_2 ,\ldots u_k\}$ be a gt-set of $G$. Then, $gt_e(G)\leq \sum_{i=1}^kd(u_i), u_i\in S$. 
\end{lem}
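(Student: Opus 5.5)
The plan is to write down an explicit edge geodesic transversal of size at most $\sum_{i=1}^k d(u_i)$. Let
\[ T=\bigcup_{i=1}^{k} E(u_i), \]
where $E(u_i)$ is the set of edges incident with $u_i$. Clearly $|T|\le \sum_{i=1}^k d(u_i)$, with strict inequality when two vertices of $S$ are adjacent, since the common edge is counted twice in the sum. It therefore suffices to show that $T$ is an edge geodesic transversal; then $gt_e(G)\le |T|$ gives the lemma.

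Let $P$ be any maximal geodesic of $G$. Since $S$ is a geodesic transversal, $P$ contains some vertex $u_i\in S$. If $P$ has at least one edge, then $u_i$ is incident with at least one edge of $P$: an interior vertex has two, an end vertex has one. Every such edge lies in $E(u_i)\subseteq T$, so $P$ traverses an edge of $T$.

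The only obstacle is the degenerate case in which $P$ has no edges. A single vertex $v$ is a maximal geodesic only if it cannot be extended, which in a connected graph forces $G=K_1$. I would exclude this by assuming $G$ has at least one edge, that is, $G\neq K_1$ (the paper's convention for $K_1$ is that $gt_e(K_1)=0$). In every other connected graph each maximal geodesic has length at least $1$, and the argument above goes through.
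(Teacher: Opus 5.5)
Your proposal is correct and follows the same route as the paper: take $T$ to be the set of all edges incident with vertices of the $gt$-set, and observe that every maximal geodesic meets some $u_i$ through an edge incident with it. Your version is slightly more careful. You write $|T|\le\sum_{i=1}^k d(u_i)$, whereas the paper writes equality, which fails when two vertices of $S$ are adjacent. You also explicitly handle the edgeless case $G=K_1$.
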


\begin{proof}
    Every maximal geodesic of $G$ meets at least one $u_i\in S$, through an edge incident to it. Then, the set $T$ of edges incident on every $u_i, i\in [1,k]$ will be an edge geodesic transversal of $G$.  Hence, $gt_e(G)\leq |T|=\sum_{i=1}^kd(u_i), u_i\in S$. 
\end{proof}

\begin{obs}
    The following results hold for standard graph families:
    \begin{itemize}
        \item $gt_e(P_n)=1$
        \item $gt_e(C_n)=\begin{cases}
            2, n \text{ is even.}\\
            3, n \text{ is odd.}            
        \end{cases}$
        \item $gt_e(K_n)=n(n-1)/2$
        \item $gt(K_{1,n})=n-1$
    \end{itemize}

\end{obs}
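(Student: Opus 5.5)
The claim is a list of four independent small computations, and in each case I would identify the maximal geodesics explicitly and then argue upper and lower bounds by counting. For the last item I read the statement as $gt_e(K_{1,n})=n-1$ (with $n\geq 2$): one has $gt(K_{1,n})=1$, since the centre hits every maximal geodesic, so the printed ``$gt$'' must be a misprint for $gt_e$.

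For $P_n$ ($n\geq2$) and $K_n$ the arguments are as in the preliminaries. In $P_n$ the whole path is the unique maximal geodesic, so any single edge works and at least one edge is needed. In $K_n$ every edge is a geodesic. None can be extended, since the diameter is $1$. Hence every edge is itself a maximal geodesic and must lie in $S$, giving $n(n-1)/2$. For $K_{1,n}$ with centre $c$ and leaves $x_1,\dots,x_n$, the maximal geodesics are exactly the paths $x_icx_j$ with $i\neq j$. A set $S$ of edges is a transversal iff no two edges outside $S$ remain. So $|E\setminus S|\leq 1$ is forced, and it suffices, giving $n-1$.

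For cycles I would label the edges cyclically $e_0,\dots,e_{n-1}$ and use the following fact. When $n=2k$ or $n=2k+1$, the maximal geodesics are exactly the paths formed by $k$ consecutive edges. Such a window joins vertices at distance $k=\lfloor n/2\rfloor$, which is the diameter, so it cannot be extended. Conversely, every geodesic has length at most $k$ and lies inside such a window. Thus $S$ is a transversal iff every arc of $C_n-S$ has fewer than $k$ edges.

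\begin{itemize}
\item For $n=2k$, the two halves between an antipodal pair are edge-disjoint maximal geodesics, so $gt_e\geq 2$. Taking two antipodal edges $e_0,e_k$ leaves two arcs of $k-1$ edges each, so $gt_e=2$.
\item For $n=2k+1$, suppose $|S|=2$. Then the two remaining arcs contain $2k-1$ edges in total, so one of them has at least $k$ edges, a contradiction. With three edges chosen so that the arcs have sizes $k-1,k-1,0$ (for instance $e_0,e_k,e_{2k}$), every arc is shorter than $k$. For $n=3$ this means all three edges, consistent with $K_3$. Hence $gt_e=3$.
\end{itemize}

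The only step needing care is the characterisation of maximal geodesics in $C_n$ as exactly the $k$-edge windows, particularly for odd $n$, where each vertex has two vertices at maximum distance. Everything else is a pigeonhole count on arc lengths.
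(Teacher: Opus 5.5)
Your proposal is correct. For $P_n$ and $K_n$ it is the reasoning the paper gives in its preliminaries. For $C_n$ and $K_{1,n}$ the paper states the values without any argument, so your two ingredients supply what is missing: the characterization of the maximal geodesics of $C_n$ as the windows of $\lfloor n/2\rfloor$ consecutive edges, and the pigeonhole count on arc lengths.

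Note that for odd $n=2k+1\geq 5$ the packing bound of Lemma~\ref{lem4} cannot reach the answer. Each maximal geodesic has $k$ edges and $3k>2k+1$, so $C_n$ has at most two edge-disjoint maximal geodesics. A counting argument like yours is therefore genuinely needed to get $3$.

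Your reading of the last item is also right. Taken literally, $gt(K_{1,n})=1$, since the centre meets every maximal geodesic. The intended statement $gt_e(K_{1,n})=n-1$ needs $n\geq 2$, because $K_{1,1}=K_2$ has $gt_e=1$.

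Two small tidy-ups:
\begin{itemize}
\item In the odd-cycle case you should also exclude $|S|\leq 1$. This is immediate because any superset of an edge geodesic transversal is again one, or by the same count.
\item Your criterion ``every arc of $C_n-S$ has fewer than $k$ edges'' tacitly assumes $S\neq\emptyset$.
\end{itemize}
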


\section{Edge cut Transversal Theorem}
Let $G=(V,E)$ be a connected graph and $X\subseteq V(G)$. An edge cut is a subset of edges of the form $E[X,V\backslash X]$, consisting of all edges with one end vertex in $X$ and the other end in the complement $V\backslash X$. The removal of an edge cut from $G$ disconnects $G$. In this section, we derive a relationship between the cardinality of an edge cut and the edge geodesic transversal number of a graph $G$, subject to certain conditions.

\begin{lem}\label{lem3}
    Let $G$ be a connected graph and $S$ be an edge cut of $G$. If for every maximal geodesic $P$ in $G$, $|E(P)\cap S|=1$, then $gt_e(G)\leq |S|$.
\end{lem}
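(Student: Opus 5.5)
The plan is to show directly that $S$ itself is an edge geodesic transversal of $G$. Once that is established, the definition of $gt_e(G)$ as the minimum cardinality of an edge geodesic transversal gives $gt_e(G)\leq |S|$ at once.

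So let $P$ be an arbitrary maximal geodesic of $G$. By hypothesis $|E(P)\cap S|=1$, so in particular $E(P)\cap S\neq\emptyset$: $P$ traverses at least one edge of $S$. Since $P$ was arbitrary, every maximal geodesic of $G$ passes through an edge of $S$. This is exactly the statement that $S$ is an edge geodesic transversal. Hence the minimum size of an edge geodesic transversal, namely $gt_e(G)$, is at most $|S|$.

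There is no real obstacle here. Only the weak consequence ``at least one'' of the hypothesis is used. The fact that $S$ is an edge cut, and the ``exactly one'' part of the hypothesis, play no role in this direction. Presumably they matter for a companion lower bound in the edge cut transversal theorem, where one would argue that each edge of $S$ lies on some maximal geodesic avoiding all other edges of $S$, forcing $gt_e(G)\geq|S|$. The only point to state carefully is that the inequality follows from minimality in the definition of $gt_e(G)$.
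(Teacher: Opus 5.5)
Your proof is correct, and it is the cleanest way to establish the lemma. The hypothesis gives $E(P)\cap S\neq\emptyset$ for every maximal geodesic $P$, so $S$ is itself an edge geodesic transversal, and $gt_e(G)\le |S|$ follows from minimality.

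The paper takes a different route, by contradiction. It assumes $gt_e(G)>|S|$, takes a $gt_e$-set $S'$ with $|S'|>|S|$ and picks $e\in S'\setminus S$. It then claims that if every maximal geodesic through $e$ meets $S$, then $S'\setminus\{e\}$ is still a transversal. Hence some maximal geodesic through $e$ must avoid $S$, which contradicts the hypothesis. That middle inference does not hold as written. A maximal geodesic through $e$ may meet $S$ only in edges outside $S'$, and then removing $e$ from $S'$ leaves it uncovered. The contradiction argument is easily repaired by noting that $S$ itself is a transversal smaller than $S'$, but that repair is exactly your direct argument. So your version is shorter, avoids the faulty step, and makes explicit that neither the edge-cut property nor the ``exactly one'' part of the hypothesis is needed for this inequality.

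One caution about your closing speculation: the mechanism you suggest for the matching lower bound does not work. In the ladder $P_2\square P_3$, let $S$ be the three rungs.
\begin{itemize}
\item Removing $S$ leaves two connected paths.
\item Every maximal geodesic runs between opposite corners and uses exactly one rung, so each rung lies on a maximal geodesic avoiding the other rungs.
\item Yet the two edges joining column $1$ to column $2$ already hit every maximal geodesic, since each one passes from column $1$ to column $3$.
\end{itemize}
Hence $gt_e(P_2\square P_3)\le 2<3=|S|$. In the paper, the lower bound in Theorem~\ref{edgecutthm} comes instead from condition 3 (at least $|S|$ mutually edge-disjoint maximal geodesics) via Lemma~\ref{lem4}.
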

\begin{proof}
    Assume the contrary that $gt_e(G)> |S|$ and let $S'$ be a $gt_e$-set of $G$ such that $|S'|>|S|$. Clearly, $S'$ contains at least one edge $e\in S'\backslash S$. If every maximal geodesic through $e$ intersects $S$, then $S'\backslash \{e\}$ remains a $gt_e$-set which contradicts the minimality of $S'$. Hence, there exists a maximal geodesic $P_e$ in $G$ containing $e$ such that $|E(P_e)\cap S|=\emptyset$. But then, $|E(P)\cap S|=\emptyset$ contradicts the hypothesis of the lemma and hence $gt_e(G)\leq |S|$. 
\end{proof}

\begin{lem}\label{lem4}
    If in a connected graph $G$, there exist $k$ mutually edge disjoint maximal geodesics, then $gt_e(G)\geq k$.
\end{lem}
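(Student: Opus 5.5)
The plan is a direct pigeonhole argument. Let $P_1, P_2, \ldots, P_k$ be $k$ mutually edge disjoint maximal geodesics of $G$, and let $T$ be an arbitrary edge geodesic transversal of $G$; in particular we may take $T$ to be a $gt_e$-set. By definition, each $P_i$ traverses at least one edge of $T$. So for every $i\in[1,k]$ we can choose an edge $e_i\in E(P_i)\cap T$, which gives a map $i\mapsto e_i$ from $\{1,\ldots,k\}$ into $T$.

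The key step is to show that this map is injective. Suppose $e_i=e_j$ for some $i\neq j$. Then this common edge lies in $E(P_i)\cap E(P_j)$, which contradicts the assumption that $P_i$ and $P_j$ are edge disjoint. Hence the edges $e_1,\ldots,e_k$ are pairwise distinct elements of $T$. It follows that $|T|\geq k$, and taking $T$ to be a $gt_e$-set yields $gt_e(G)\geq k$.

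There is no genuine obstacle here. The only point requiring care is to use edge disjointness, not vertex disjointness. Two maximal geodesics may share vertices and still need different edges of $T$, which is exactly why the edge version gives this bound while the vertex version (used for $gt$) would require vertex disjoint geodesics. Every maximal geodesic has at least one edge provided $G$ has more than one vertex, so each $P_i$ genuinely needs an edge of $T$. The trivial graph $K_1$ can be set aside, since the statement is vacuous or trivial there.
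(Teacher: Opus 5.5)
Your proposal is correct and is essentially the paper's argument: each of the $k$ edge disjoint maximal geodesics must contain an edge of the transversal, and edge disjointness makes these edges distinct. You simply state the injectivity explicitly, which the paper leaves implicit in its one-line proof.
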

\begin{proof}
    Every edge geodesic transversal of $G$ intersects each of $k$ mutually edge disjoint maximal geodesics of $G$, hence $gt_e(G)\geq k$.
\end{proof}

From Lemma \ref{lem3} and \ref{lem4}, we derive the following edge cut transversal theorem which states that if an edge cut $S$ separates $G$ into two connected components  such that every maximal geodesic $P$ of $G$ contains exactly one edge of $S$ and $G$ admits at least $|S|$ mutually edge disjoint maximal geodesics, then the edge geodesic transversal number of $G$ is equal to the cardinality of $G$.
\begin{thm}\label{edgecutthm}
    \textbf{Edge-cut transversal Theorem} \\ Let $S=\{e_1,e_2,...,e_k\}$ be an edge cut of $G$ satisfying the following conditions.
   \begin{enumerate}
       \item The removal of $S$ disconnects $G$ into two connected components $G_1$ and $G_2$.
       \item $|E(P)\cap S|=1$, for every maximal geodesic $P$ in $G$.
       \item $G$ has at least $k$ mutually edge disjoint maximal geodesics.
   \end{enumerate}
   Then $gt_e(G)=|S|$.
\end{thm}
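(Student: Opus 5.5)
The plan is to prove the two inequalities $gt_e(G)\leq |S|$ and $gt_e(G)\geq |S|$ separately, taking them from Lemma \ref{lem3} and Lemma \ref{lem4}.

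For the upper bound I would not lean on the contradiction argument in the proof of Lemma \ref{lem3}. It is cleaner to argue directly from condition 2. Every maximal geodesic $P$ of $G$ satisfies $|E(P)\cap S|=1$, so in particular $E(P)\cap S\neq\emptyset$. Hence $S$ is itself an edge geodesic transversal of $G$. Since $gt_e(G)$ is the minimum cardinality of such a set, $gt_e(G)\leq |S|=k$. This is exactly the conclusion of Lemma \ref{lem3}, whose hypotheses (that $S$ is an edge cut and meets every maximal geodesic in exactly one edge) are supplied by conditions 1 and 2.

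For the lower bound, condition 3 gives $k$ mutually edge disjoint maximal geodesics $P_1,\ldots,P_k$. Any edge geodesic transversal $T$ must contain an edge of each $E(P_i)$. Because the sets $E(P_i)$ are pairwise disjoint, these edges are distinct, so $|T|\geq k$. This is Lemma \ref{lem4}, which yields $gt_e(G)\geq k=|S|$.

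Combining the two inequalities gives $gt_e(G)=|S|$. Condition 1 is not actually used beyond guaranteeing that $S$ is an edge cut. The only exactly-one part of condition 2 that matters is that each maximal geodesic meets $S$ at least once. So the proof has no genuine obstacle.

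The real difficulty lies in applying the theorem to specific graphs. For the hypercube, stacked prism and ladder, one has to choose a cut $S$ for which every maximal geodesic crosses $S$ exactly once. For the hypercube I would try the cut of all edges in one coordinate direction. One must also exhibit $|S|$ edge disjoint maximal geodesics; I would look for them, for instance, among antipodal paths each using a distinct edge of $S$.
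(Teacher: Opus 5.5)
Your proof is correct and follows the paper's route: the theorem is obtained by combining the upper bound of Lemma~\ref{lem3} with the lower bound of Lemma~\ref{lem4}. Your direct justification of the upper bound (condition 2 makes $S$ itself an edge geodesic transversal) is preferable to the paper's contradiction proof of Lemma~\ref{lem3}, because the paper's step ``every maximal geodesic through $e$ meets $S$, so $S'\setminus\{e\}$ is still a transversal'' does not follow, as $S$ need not be contained in $S'$.
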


We now recall the definition of hypercube, stacked prism and ladder graph. For $r\geq 1$, $Q_r$ denotes $r$-dimensional hypercube. The vertex set of $Q_r$ is formed by the collection of all $r$-dimensional binary representations. Two vertices $x,y\in V(Q_r)$ are adjacent iff the corresponding binary representations differ by exactly one bit. The length of a diametral path of $Q_r$ is $r$ \cite{hypercube}. A stacked prism is defined as $Y_{m,n}=C_m\square P_n$ for $m\geq 3$ and $n>1$. A stacked prism is also known as generalized prism graph \cite{stackedprism}. The ladder graph, denoted as $L_n$ is the cartesian product $P_2\square P_n$ \cite{ladder}. 

\begin{cor}\label{hyperresult}
    If $Q_r$ is a hypercube of $r$- dimension, then $gt(Q_r)=2^{r-1}$.
\end{cor}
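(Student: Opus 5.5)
The statement is presumably meant to read $gt_e(Q_r)=2^{r-1}$: it is placed as an application of the edge cut transversal theorem, and the overview says this is the hypercube result. I plan to prove that version by checking the three hypotheses of Theorem~\ref{edgecutthm}. For $S$ I take the set of all edges of $Q_r$ joining vertices that differ in the first bit. Then $|S|=2^{r-1}$. Removing $S$ leaves two copies of $Q_{r-1}$: the vertices with first bit $0$ and those with first bit $1$. Both are connected, so condition 1 holds.

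For condition 2, I first identify the maximal geodesics of $Q_r$. A path is a geodesic if and only if it flips each coordinate at most once. Take a geodesic from $u$ to $v$ with $v\neq\bar u$, where $\bar u$ is the antipode of $u$. Some coordinate $j$ has $u_j=v_j$. Flipping $j$ at $v$ gives a vertex at distance $d(u,v)+1$ from $u$, so the geodesic extends and is not maximal. Hence every maximal geodesic joins antipodal vertices, has length $r$, and flips every coordinate exactly once. In particular it uses exactly one edge of $S$, which is condition 2.

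Condition 3 asks for $2^{r-1}$ mutually edge-disjoint diametral paths. Since $|E(Q_r)|=r2^{r-1}$ and each such path has $r$ edges, these paths must decompose $E(Q_r)$. This is the step I expect to be the main obstacle. The natural explicit attempt is to start at each $x$ with $x_1=0$ and flip the coordinates in the order $1,2,\dots,r$. Its edges are distinct as directed edges, but the same undirected edge can occur in two such paths once $i\ge2$, so this does not work as it stands.

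I would therefore go by induction on $r$, writing $Q_r=Q_{r-1}\square K_2$. Suppose $Q_{r-1}$ has such a decomposition $\mathcal P$, and let $P\in\mathcal P$ be a diametral path from $a$ to $\bar a$. From $P$ I form two paths in $Q_r$:
\begin{itemize}
\item $P'$: go from $(a,0)$ to $(a,1)$, then follow the copy of $P$ in layer $1$;
\item $P''$: follow the copy of $P$ in layer $0$, then go from $(\bar a,0)$ to $(\bar a,1)$.
\end{itemize}
Both are diametral paths of $Q_r$. The edges inside the two layers are disjoint because $\mathcal P$ is a decomposition. The new-direction (vertical) edges used by all these paths are disjoint exactly when the endpoint pairs $\{a,\bar a\}$, $P\in\mathcal P$, partition $V(Q_{r-1})$. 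So the induction has to carry this endpoint-partition property as well. I note in advance that this construction gives $P'$ and $P''$ the same endpoints, which destroys that property for the next step.

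To repair this, I would first try re-routing one member of each pair, for example using a vertical edge in the middle of the path instead of at an end. If that fails, I would use a direct Gray-code/cyclic construction: for each $x$ with $x_1=0$, start at $x$ and flip coordinates in the cyclic order $\sigma_x(1),\dots,\sigma_x(r)$, with the rotation $\sigma_x$ chosen according to the weight of $x$ modulo $r$. I would then verify edge-disjointness by recovering $x$ from any single undirected edge. A small base case such as $Q_2=C_4$, which splits into two edge-disjoint paths of length $2$, anchors either approach. Once condition 3 holds, Theorem~\ref{edgecutthm} gives $gt_e(Q_r)=|S|=2^{r-1}$.
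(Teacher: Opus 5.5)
Your setup is sound. You read the statement as $gt_e(Q_r)=2^{r-1}$ and take the cut $S$ of first-coordinate edges. Your argument that a maximal geodesic joins antipodal vertices and flips every coordinate exactly once is correct; the paper merely asserts this. Together these settle conditions 1 and 2 of Theorem~\ref{edgecutthm}.

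\textbf{The gap is condition 3}, which carries the entire lower bound: you never exhibit $2^{r-1}$ edge-disjoint maximal geodesics. Your diagnosis of the doubling construction is correct, and it is exactly the paper's construction. The paper asserts that edge-disjointness of $\mathcal{P}$ makes the new paths edge-disjoint, which ignores the vertical edges. This works for $Q_3\to Q_4$ only because the $Q_3$ family happens to have disjoint endpoint pairs. It fails at $Q_4\to Q_5$, since $P_j'$ and $P_j''$ join the same two vertices, so their descendants reuse the same two vertical edges.

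Your two repairs are only named, not specified or verified. Moreover, the invariant you want to carry cannot hold at every level. In any decomposition of $E(Q_r)$ into paths, each vertex ends a number of paths congruent to its degree $r$ modulo $2$. So for even $r$ the endpoint pairs can never partition $V(Q_r)$. In particular $Q_2=C_4$ cannot anchor the induction as you describe it, because its two geodesic halves join the same antipodal pair; the step $Q_2\to Q_3$ already fails.

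The missing step has a short fix very close to your naive attempt: choose the starting vertices by parity instead of by the first bit.
\begin{itemize}
\item For each $x$ of even weight, let $P_x$ be the path $x,\ x\oplus e_1,\ x\oplus e_1\oplus e_2,\ \dots,\ x\oplus e_1\oplus\cdots\oplus e_r=\bar x$, and write $s_j=e_1\oplus\cdots\oplus e_j$.
\item The only edge of $P_x$ in direction $t$ is $\{x\oplus s_{t-1},x\oplus s_t\}$. Hence an edge $\{u,u\oplus e_t\}$ lies on $P_x$ if and only if $x\in\{u\oplus s_{t-1},u\oplus s_t\}$.
\item These two candidates differ only in coordinate $t$, so exactly one of them has even weight. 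With the rule $x_1=0$, the candidates are separated only when $t=1$, which is the collision you found.
\end{itemize}
So the $2^{r-1}$ paths $P_x$ are pairwise edge-disjoint maximal geodesics for every $r$, and Theorem~\ref{edgecutthm} applies.

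Alternatively, you can bypass condition 3 entirely by double counting. $Q_r$ has $2^{r-1}r!$ maximal geodesics, and by edge-transitivity (or a direct count) each edge lies on exactly $r!$ of them. Hence every edge geodesic transversal has at least $2^{r-1}$ edges.
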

\begin{proof}
    $Q_r$ is obtained from two copies of $Q_{r-1}$, namely $Q_{r-1}^0$ and $Q_{r-1}^1$ . If $x\in V(Q_{r-1})$, then the corresponding vertices in the two copies of $Q_r$ are denoted by $0x$ and $1x$ respectively. Hence, $V(Q_n)=\{0x:x\in V(Q_{r-1})\}\cup \{1x:x\in V(Q_{r-1})\}$. We observe that the set $S=\{(0x,1x): 0x\in V(Q_{r-1}^0), 1x\in V(Q_{r-1}^1)\}$ of edges that joins the two copies is an edge cut of $Q_r$.
    We now claim that every maximal geodesic of $Q_r$ contains exactly one edge of $S$. Suppose on the contrary, there exist a maximal geodesic $P(x,y)$ in $Q_r$ containing two edges $e_1,e_2\in S$, then both end vertices of $P$ lie either in $Q_{r-1}^0$ or $Q_{r-1}^1$. Without loss of generality assume $x,y\in Q_{r-1}^0$. As every maximal geodesic is a diametral path, $l(P)=r$. But then there exists a geodesic $P^0(x,y)$ that lies entirely in $Q_{r-1}^0$ such that $l(P^0)\leq r-1$ since $diam(Q_{r-1}^0)=r-1$. This contradicts the assumption that $P$ is a maximal geodesic. Thus, for every maximal geodesic $P$ in $Q_r$, $|E(P)\cap S|=1$. 

    \begin{figure}[htbp]
\centering
\begin{tikzpicture}[
    vertex/.style={
        circle,
        draw=black,
        fill=white,
        line width=1pt,
        minimum size=2.5mm,
        inner sep=0pt
    },
    edge/.style={black, line width=1pt},
    rededge/.style={
        red!75!black,
        dashed,
        line width=1pt,
        dash pattern=on 5pt off 4pt
    }
]

% Outer square vertices
\node[vertex] (a) at (0,5.2) {};
\node[vertex] (b) at (5.2,5.2) {};
\node[vertex] (c) at (0,0) {};
\node[vertex] (d) at (5.2,0) {};

% Inner square vertices
\node[vertex] (e) at (1.2,4.0) {};
\node[vertex] (f) at (4.0,4.0) {};
\node[vertex] (g) at (1.2,1.2) {};
\node[vertex] (h) at (4.0,1.2) {};

% Red dashed edges (drawn first)
\draw[rededge] (a) -- (e);
\draw[rededge] (b) -- (f);
\draw[rededge] (c) -- (g);
\draw[rededge] (d) -- (h);

% Outer square edges
\draw[edge] (a) -- (b);
\draw[edge] (b) -- (d);
\draw[edge] (d) -- (c);
\draw[edge] (c) -- (a);

% Inner square edges
\draw[edge] (e) -- (f);
\draw[edge] (f) -- (h);
\draw[edge] (h) -- (g);
\draw[edge] (g) -- (e);

% Labels: positioned outside the vertices
\node[anchor=east, xshift=-3pt] at (a) {$010$};
\node[anchor=west, xshift=3pt] at (b) {$011$};
\node[anchor=east, xshift=-3pt] at (c) {$000$};
\node[anchor=west, xshift=3pt] at (d) {$001$};

\node[anchor=east, xshift=-3pt] at (e) {$110$};
\node[anchor=west, xshift=3pt] at (f) {$111$};
\node[anchor=east, xshift=-3pt] at (g) {$100$};
\node[anchor=west, xshift=3pt] at (h) {$101$};

\end{tikzpicture}
\caption{An edge cut of $Q_3$.}
\label{edgecutQ2Q3}
\end{figure}

\begin{figure}[htbp]
\centering
\begin{tikzpicture}[
    vertex/.style={
        circle,
        draw=black,
        fill=white,
        line width=1pt,
        minimum size=2.5mm,
        inner sep=0pt
    },
    edge/.style={line width=1pt}
]

% Outer square vertices
\node[vertex] (a) at (0,5) {};
\node[vertex] (b) at (5,5) {};
\node[vertex] (c) at (0,0) {};
\node[vertex] (d) at (5,0) {};

% Inner square vertices
\node[vertex] (e) at (1.25,3.75) {};
\node[vertex] (f) at (3.75,3.75) {};
\node[vertex] (g) at (1.25,1.25) {};
\node[vertex] (h) at (3.75,1.25) {};

% Outer edges
\draw[edge, red!75!black] (a) -- (b);
\draw[edge, red!75!black] (a) -- (c);
\draw[edge, blue!75!black] (b) -- (d);
\draw[edge, blue!75!black] (c) -- (d);

% Inner edges
\draw[edge, green!70!black] (e) -- (f);
\draw[edge, green!70!black] (f) -- (h);
\draw[edge, magenta] (e) -- (g);
\draw[edge, magenta] (g) -- (h);

% Connecting edges
\draw[edge, green!70!black] (a) -- (e);
\draw[edge, red!75!black] (f) -- (b);
\draw[edge, blue!75!black] (c) -- (g);
\draw[edge, magenta] (h) -- (d);

% Vertex labels
\node[anchor=east, xshift=-3pt] at (a) {$010$};
\node[anchor=west, xshift=3pt] at (b) {$011$};
\node[anchor=east, xshift=-3pt] at (c) {$000$};
\node[anchor=west, xshift=3pt] at (d) {$001$};

\node[anchor=east, xshift=-3pt] at (e) {$110$};
\node[anchor=west, xshift=3pt] at (f) {$111$};
\node[anchor=east, xshift=-3pt] at (g) {$100$};
\node[anchor=west, xshift=3pt] at (h) {$101$};

\end{tikzpicture}
\caption{Four edge disjoint maximal geodesics in $Q_3$.}
\label{disjointmaxgeodesics}
\end{figure}

    Our second claim is that  $Q_r$ contains $2^{r-1}$ edge disjoint maximal geodesics. We prove this by induction on $r$. For the base case $r=3$, Figure \ref{disjointmaxgeodesics} illustrates $2^{3-1}=4$ edge disjoint maximal geodesics in $Q_3$. Assume that the statement holds for $r=k-1$, that is $Q_{k-1}$ contains $2^{k-2}$ edge disjoint maximal geodesics and let $\mathcal{P}=\{P_1, P_2,...,P_{2^{k-2}}\}$ where each $P_j=P_j(x_i,\Bar{x_i}), x_i\in V(Q_{k-1}), 1\leq j\leq 2^{k-2}$ be the set of edge disjoint maximal geodesics in $Q_{k-1}$. We now construct $2^{k-1}$ edge disjoint maximal geodesics in $Q_k$. For each $P_j(x_i,\Bar {x_i})$, construct the path $P_j':(0x_i)...(0\Bar {x_i})(1,\Bar {x_i})$ in $Q_r$ containing $P_j(0x_i,0\Bar{x_i})$ as its sub-path. Analogously, we construct $P_j'':(1 \Bar{x_i})...(1{x_i})(0,{x_i})$ in $Q_r$ containing $P_j(1x_i,1\Bar{x_i})$ as its sub-path, refer Figure \ref{q4}.  Since the geodesics in $\mathcal{P}$ are pair-wise edge disjoint, the constructed paths are also mutually edge disjoint. Hence, the set $\mathcal{T}=\{P_j'\}\cup \{P_j''\}, 1\leq j\leq 2^{k-2}$ is a set of mutually edge disjoint maximal geodesics of $Q_k$ with cardinality $2^{k-1}$. Thus, $Q_r$ satisfies the hypotheses of edge-cut transversal theorem and application of the theorem completes the proof.
\end{proof}

\begin{figure}[htbp]
\centering
\begin{tikzpicture}[
    x=0.9cm,
    y=0.9cm,
    vertex/.style={
        circle,
        draw=black,
        fill=white,
        line width=1pt,
        minimum size=2.5mm,
        inner sep=0pt
    },
    edge/.style={draw=black, line width=1pt},
    rededge/.style={draw=red!80!black, line width=1pt},
    blueedge/.style={draw=blue!80!black, line width=1pt},
    dashededge/.style={
        dashed,
        line width=1pt,
        dash pattern=on 5pt off 3.5pt
    },
    lab/.style={font=\small}
]

% Left cube vertices
\node[vertex] (v0000) at (0,0) {};
\node[vertex] (v0001) at (5,0) {};
\node[vertex] (v0010) at (0,4.8) {};
\node[vertex] (v0011) at (5,4.8) {};

\node[vertex] (v0100) at (1.15,1.15) {};
\node[vertex] (v0101) at (3.85,1.15) {};
\node[vertex] (v0110) at (1.15,3.65) {};
\node[vertex] (v0111) at (3.85,3.65) {};

% Right cube vertices
\node[vertex] (v1000) at (8.1,0) {};
\node[vertex] (v1001) at (13.1,0) {};
\node[vertex] (v1010) at (8.1,4.8) {};
\node[vertex] (v1011) at (13.1,4.8) {};

\node[vertex] (v1100) at (9.25,1.15) {};
\node[vertex] (v1101) at (11.95,1.15) {};
\node[vertex] (v1110) at (9.25,3.65) {};
\node[vertex] (v1111) at (11.95,3.65) {};

% Left cube outer edges
\draw[rededge] (v0000)--(v0010);
\draw[rededge] (v0010)--(v0011);
\draw[edge] (v0011)--(v0001);
\draw[edge] (v0001)--(v0000);

% Left cube inner edges
\draw[edge] (v0100)--(v0101);
\draw[edge] (v0101)--(v0111);
\draw[edge] (v0111)--(v0110);
\draw[edge] (v0110)--(v0100);

% Left cube connecting edges
\draw[edge] (v0000)--(v0100);
\draw[edge] (v0001)--(v0101);
\draw[edge] (v0010)--(v0110);
\draw[rededge] (v0011)--(v0111);

% Right cube outer edges
\draw[blueedge] (v1000)--(v1010);
\draw[blueedge] (v1010)--(v1011);
\draw[edge] (v1011)--(v1001);
\draw[edge] (v1001)--(v1000);

% Right cube inner edges
\draw[edge] (v1100)--(v1101);
\draw[edge] (v1101)--(v1111);
\draw[edge] (v1111)--(v1110);
\draw[edge] (v1110)--(v1100);

% Right cube connecting edges
\draw[edge] (v1000)--(v1100);
\draw[edge] (v1001)--(v1101);
\draw[edge] (v1010)--(v1110);
\draw[blueedge] (v1011)--(v1111);

% Dashed curved edges
\draw[edge,dashededge]
    (v0010) .. controls (1.4,7.2) and (6.7,7.2) .. (v1010);

\draw[edge,dashededge]
    (v0011) .. controls (5.9,7.2) and (11.2,7.2) .. (v1011);

\draw[edge,dashededge]
    (v0110) .. controls (3.0,2.7) and (6.9,2.7) .. (v1110);

\draw[rededge,dashededge]
    (v0111) .. controls (5.6,4.50) and (9.8,4.50) .. (v1111);

\draw[edge,dashededge]
    (v0100) .. controls (3.0,0.35) and (6.9,0.35) .. (v1100);

\draw[edge,dashededge]
    (v0101) .. controls (5.6,2.25) and (9.8,2.25) .. (v1101);

\draw[blueedge,dashededge]
    (v0000) .. controls (1.5,-2.4) and (6.6,-2.4) .. (v1000);

\draw[edge,dashededge]
    (v0001) .. controls (6.0,-2.4) and (11.1,-2.4) .. (v1001);

% Vertex labels: left cube outer vertices
\node[lab,left=2pt]  at (v0010) {$0010$};
\node[lab,right=2pt] at (v0011) {$0011$};
\node[lab,left=2pt]  at (v0000) {$0000$};
\node[lab,right=2pt] at (v0001) {$0001$};

% Vertex labels: left cube inner vertices
\node[lab, above right]  at (v0110) {$0110$};
\node[lab,above left]  at (v0111) {$0111$};
\node[lab,above right] at (v0100) {$0100$};
\node[lab,above left]  at (v0101) {$0101$};

% Vertex labels: right cube outer vertices
\node[lab,left=2pt]  at (v1010) {$1010$};
\node[lab,right=2pt] at (v1011) {$1011$};
\node[lab,left=2pt]  at (v1000) {$1000$};
\node[lab,right=2pt] at (v1001) {$1001$};

% Vertex labels: right cube inner vertices
\node[lab,below right] at (v1110) {$1110$};
\node[lab,below left]  at (v1111) {$1111$};
\node[lab, right=-7pt, yshift=-10pt] at (v1100) {$1100$};
\node[lab,right=-20pt, yshift=-10pt]    at (v1101) {$1101$};

\end{tikzpicture}
\caption{$P'(0000,1111)$ containing $P(0000,0111)$ of $Q_3^0$ and $P''(1111,0000)$ containing $P(1111,1000)$ of $Q_3^1$ shown in red and blue lines respectively.}
\label{q4}
\end{figure}

\begin{cor}
    If $L_n$ is a ladder graph, then $gt(L_n)=2$.
\end{cor}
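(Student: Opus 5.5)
The plan is to describe the maximal geodesics of $L_n$ explicitly, exhibit a geodesic transversal of size two, and then show that no single vertex meets every maximal geodesic. Throughout I assume $n\geq 2$; for $n=1$ we have $L_1=K_2$, whose single maximal geodesic is hit by one vertex. Write the vertices of $L_n=P_2\square P_n$ as $(i,j)$ with $i\in\{1,2\}$ and $j\in\{1,\ldots,n\}$. Then $d((i,j),(i',j'))=|j-j'|+[i\neq i']$.

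\emph{Step 1: structure of maximal geodesics.} I will show that every maximal geodesic joins $(1,1)$ to $(2,n)$ or joins $(2,1)$ to $(1,n)$.
\begin{itemize}
\item A geodesic is monotone in the column index and uses at most one rung (an edge between the two rows).
\item Take a geodesic $P$ running from column $a$ to column $b$ with $a\leq b$. Extend it leftwards along the row of its left end to column $1$, and rightwards along the row of its right end to column $n$.
\item If $P$ uses no rung, both ends lie in the same row. Before extending rightwards, first add the rung at column $b$.
\item In every case the length of the extended path equals the distance between its new ends. Hence the extended path is still a geodesic.
\end{itemize}
Consequently a geodesic is maximal only if its ends are $\{(1,1),(2,n)\}$ or $\{(2,1),(1,n)\}$. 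Conversely, such paths are diametral, of length $n$, and so cannot be extended.

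\emph{Step 2: upper bound.} By Step 1, every maximal geodesic has an end vertex in column $1$. Therefore $S=\{(1,1),(2,1)\}$ is a geodesic transversal, and $gt(L_n)\leq 2$.

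\emph{Step 3: lower bound.} The two paths $(1,1)\to(2,n)$ and $(2,1)\to(1,n)$ necessarily cross, so a disjoint-paths argument is unavailable. Instead I will show directly that no single vertex $v$ is a transversal. By the symmetry swapping the rows, I may take $v=(1,j)$.
\begin{itemize}
\item If $j\geq 2$, the maximal geodesic $(1,1)(2,1)(2,2)\cdots(2,n)$ avoids $v$.
\item If $j=1$, the maximal geodesic $(2,1)(2,2)\cdots(2,n)(1,n)$ avoids $v$, since $n\geq 2$.
\end{itemize}
Hence $gt(L_n)\geq 2$, and together with Step 2 this gives $gt(L_n)=2$.

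The main care point is Step 1. One must check that every geodesic, including those lying entirely within one row, extends to a corner-to-opposite-corner geodesic. The argument rests on the distance formula above, since each extension step increases the distance between the ends by exactly one.
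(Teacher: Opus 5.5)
Your argument is a correct proof of the statement as literally written, namely the vertex number $gt(L_n)=2$ for $n\geq 2$, and you rightly set $n=1$ aside. The distance formula, the extension of every geodesic to a corner-to-opposite-corner one, the transversal $\{(1,1),(2,1)\}$ and the two avoiding geodesics in Step~3 are all sound. It is, however, not the result the paper proves. The paper's proof is ``identical to that of Corollary~\ref{hyperresult}'', which ends by applying Theorem~\ref{edgecutthm}, and that theorem computes $gt_e$. The ``$gt$'' in these corollaries is a slip for $gt_e$: read with the vertex number, Corollary~\ref{hyperresult} would already fail for $r=3$, since $\{011,101,110\}$ meets every maximal geodesic of $Q_3$. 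Against the intended claim $gt_e(L_n)=2$, your proposal is missing the upper bound. You never exhibit two \emph{edges} meeting every maximal geodesic, and a vertex transversal does not convert into an edge transversal of the same size.

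The repair uses only your Step~1. The lower bound is immediate: $gt_e(L_n)\geq gt(L_n)\geq 2$ by the lemma $gt_e(G)\geq gt(G)$ and your Step~3. Alternatively, apply Lemma~\ref{lem4} to the edge-disjoint maximal geodesics $(1,1)(1,2)\cdots(1,n)(2,n)$ and $(1,1)(2,1)(2,2)\cdots(2,n)$.

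For the upper bound, fix $1\leq j\leq n-1$ and put $S_j=\{(1,j)(1,j+1),(2,j)(2,j+1)\}$. By Step~1, every maximal geodesic runs monotonically from column $1$ to column $n$, so it contains exactly one edge of $S_j$. Removing $S_j$ leaves the two connected ladders on columns $1,\ldots,j$ and $j+1,\ldots,n$. These are precisely the hypotheses of Theorem~\ref{edgecutthm} with $|S_j|=2$, which is the route the paper has in mind.

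A verbatim transcription of the hypercube proof does not work here. That would take the cut formed by the $n$ rungs joining the two copies of $P_n$. Every maximal geodesic does use exactly one rung, so condition~2 holds. But for $n\geq 3$ there are no $n$ pairwise edge-disjoint maximal geodesics, since any such family has at most $|S_j|=2$ members. So condition~3 fails, and the cut must be taken between consecutive columns. Your explicit description of the maximal geodesics is exactly what is needed to verify condition~2, a step the paper's one-line proof leaves to the reader.
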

\begin{proof}
    The ladder graph can be considered as the cartesian product $P_n\square K_2$ and the proof is identical to that of Corollary \ref{hyperresult}. 
\end{proof}

\begin{cor}
    If $SP_{n,k}$ is a stacked prism, then $gt(SP_{n,k})=k$.
\end{cor}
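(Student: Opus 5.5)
We read $SP_{n,k}$ as $C_k\square P_n$, so that $k$ is the length of the cycle and $n$ the number of layers. We also read the claimed value as $gt_e(SP_{n,k})=k$. We will apply the edge cut transversal theorem (Theorem \ref{edgecutthm}) in the same way as in Corollary \ref{hyperresult}. Write the vertices as $(a,i)$ with $a\in\mathbb{Z}_k$ and $i\in[1,n]$. For a fixed $j\in[1,n-1]$, let $S=\{(a,j)(a,j+1): a\in\mathbb{Z}_k\}$ be the $k$ ``vertical'' edges between layers $j$ and $j+1$. Removing $S$ leaves $C_k\square P_j$ and $C_k\square P_{n-j}$, which are both connected. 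So condition 1 holds and $|S|=k$.

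For condition 2 we first describe the maximal geodesics. Distances in a cartesian product add over the coordinates. Hence a path is a geodesic if and only if both its projections are geodesics and it is monotone in each coordinate.

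\begin{itemize}
\item If an endpoint of a geodesic $P$ lies in a layer $i\notin\{1,n\}$, we can append the vertical edge pointing away from the other endpoint. This increases the $P_n$-distance by one and the path is still a geodesic.
\item Hence every maximal geodesic joins layer $1$ to layer $n$.
\item Its $P_n$-coordinate is monotone, so it uses exactly one vertical edge between layers $j$ and $j+1$. Therefore $|E(P)\cap S|=1$.
\end{itemize}

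For condition 3 we will also need the cycle part. A geodesic from $(a,1)$ to $(b,n)$ with $d_{C_k}(a,b)<\lfloor k/2\rfloor$ can be extended by one cycle edge at $(b,n)$ away from $a$. So maximal geodesics have cycle projection of length $\lfloor k/2\rfloor$ and total length $\lfloor k/2\rfloor+n-1$. For odd $k$ one should check that both antipodal choices behave correctly; this is routine.

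It remains to exhibit $k$ mutually edge disjoint maximal geodesics, after which Lemma \ref{lem4} and Lemma \ref{lem3} give $gt_e=k$. Vertical edges cause no trouble: we give path $P_a$ the vertical edges above a single cycle vertex, and choose these vertices pairwise distinct. The difficulty is the cycle edges. Each path needs $\lfloor k/2\rfloor$ of them, so $k\lfloor k/2\rfloor$ in total, out of the $nk$ available.

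My first attempt is a ``staircase''. Path $P_a$ starts at $(a,1)$, and at layer $i$ it uses the cycle edge $(a+i-1,\,a+i)$, for $i=1,\dots,\lfloor k/2\rfloor$. It then climbs vertically to layer $n$. Cyclic translates by $a$ occupy distinct cycle edges in each layer, so the $P_a$ are edge disjoint. This works when $n\ge \lfloor k/2\rfloor+1$.

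For small $n$, such as the ladder-like case $n=2$, translates of a block of consecutive cycle edges in one layer overlap. There I would instead pair the paths. I would send half the paths clockwise and half counterclockwise, and place clockwise blocks in layer $1$ and counterclockwise blocks in layer $n$, with the analogous arrangement in the intermediate layers, imitating the two-path construction of Figure \ref{disjointmaxgeodesics}.

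The main obstacle is this edge-disjoint packing for small $n$ relative to $k$. The counting bound $k\lfloor k/2\rfloor\le nk$ suggests the claim may even fail when $n<\lfloor k/2\rfloor$. So the first thing to verify is whether the statement requires a restriction on $n$ and $k$, or whether the parameters are meant the other way round.
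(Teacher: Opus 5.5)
Your route is the paper's route. The paper's one-line proof says the argument is "identical" to the hypercube case: take the $k$ rungs between two consecutive layers as the cut and apply the edge-cut transversal theorem. Several parts of your proposal are correct:
\begin{itemize}
\item your check of conditions 1 and 2;
\item your description of the maximal geodesics (endpoints in layers $1$ and $n$, cycle projection of length $\lfloor k/2\rfloor$);
\item your staircase construction, which already works for $n\ge\lfloor k/2\rfloor$, not just $\lfloor k/2\rfloor+1$, since the last cycle edge lies in layer $\lfloor k/2\rfloor\le n$.
\end{itemize}
A small correction: the staircase does not put all rungs of $P_a$ above a single cycle vertex. Between layers $j$ and $j+1$, the path $P_a$ uses the rung above $a+\min(j,\lfloor k/2\rfloor)$, and disjointness comes from translating layer by layer. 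Combined with your count, $k$ edge-disjoint maximal geodesics exist if and only if $n\ge\lfloor k/2\rfloor$. So in that range you have a complete proof.

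The gap is the range $n<\lfloor k/2\rfloor$, and it cannot be closed. Your own count rules out the clockwise/counterclockwise pairing. More importantly, the statement itself is false there. The count alone only shows that condition 3 of the theorem fails, not that $gt_e<k$; to refute the claim you need an explicit small transversal, and one is easy to find.

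Let $k$ be even, and let $e_0=\{0,1\}$ and $e_{k/2}=\{k/2,k/2+1\}$ be two opposite edges of $C_k$. The horizontal edges of any maximal geodesic project onto $k/2$ consecutive edges of $C_k$, and any $k/2$ consecutive edges contain exactly one of $e_0$ and $e_{k/2}$. Hence the $2n$ layer copies of these two edges form an edge geodesic transversal, so $gt_e(C_k\square P_n)\le 2n<k$ whenever $n<k/2$.

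Concretely, the hexagonal prism $C_6\square P_2$ has $gt_e=4\neq 6$. The matching lower bound comes from four edge-disjoint diametral paths: two run along three cycle edges of layer $1$ and then a rung, and two take a rung and then three cycle edges of layer $2$. Since $gt\le gt_e$, the literal $gt$ reading fails too. Swapping the parameters does not help either: $C_4\square P_2=Q_3$ has $gt_e=4$, not $2$.

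So the corollary needs an extra hypothesis such as $n\ge\lfloor k/2\rfloor$. The paper's proof has exactly the hole you found: it never checks condition 3 of the edge-cut theorem, which fails in this range.
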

\begin{proof}
    The stacked prism $SP_{n,k}$ is the cartesian product $P_n\square C_k$ and the proof is identical to that of Corollary \ref{hyperresult}. 
\end{proof}
\section{Conclusion}

In this paper, we introduce the edge geodesic transversal problem, the edge version of the geodesic transversal problem and compute an upper and lower bound for the $gt_e$-number of $G$. We also prove a general result, the edge cut transversal theorem, which has a lot of significance in the computation of edge geodesic transversal number of several graphs. These results not only broaden the theoretical understanding of geodesic covering parameters but also provide further scope for investigating the edge geodesic transversal problem in other graph products, such as the lexicographic and corona products of graphs.

\end{document}